\documentclass{cmslatex}
\usepackage[paperwidth=7in, paperheight=10in, margin=.875in]{geometry}

\usepackage{amsfonts,amssymb}
\usepackage{amsmath}
\usepackage{graphicx}
\usepackage{cite}
\usepackage{enumerate}
\usepackage[backref,colorlinks,linkcolor=red,anchorcolor=green,citecolor=blue]{hyperref}

\newcommand{\E}[1]{\mathbb{E}\left[#1\right]}

\newcommand{\T}{\mathbb{T}}
\newcommand{\bbP}[1]{\mathbb{P}\left(#1\right)}
\newcommand{\norm}[1]{\left\lVert #1\right\rVert}

\renewcommand{\theequation}{\arabic{section}.\arabic{equation}}

   \allowdisplaybreaks
\begin{document}
 \title{Convergence of a Randomized Newton Method in Nonconvex Optimization \thanks{Received date, and accepted date (The correct dates will be entered by the editor).}}

          %For each author, make a block with the following macros:

          \author{Edward Huynh\thanks{edhuynh@utexas.edu}
          \and Bjorn Engquist\thanks{engquist@oden.utexas.edu}}

         \pagestyle{myheadings} \markboth{A RANDOMIZED NEWTON METHOD IN NONCONVEX OPTIMIZATION}{HUYNH, ENGQUIST} \maketitle

          \begin{abstract}
            We analyze a stochastic Newton optimization scheme for locating the unique global minimizer of a general nonconvex objective function. The method couples a Newton algorithm to additive Gaussian noise with state-dependent variance. In the bounded domain setting, we prove global almost sure convergence. The proof is based on two features of the algorithm: a nondegenerate exploratory property that ensures entrance into a neighborhood of the minimizer after a finite number of steps, and a decaying-noise property that yields contraction with high probability and prevents infinitely many exits from the neighborhood of the minimum. 
          \end{abstract}
          
\begin{keywords}  randomized Newton's method, stochastic optimization, quadratic convergence
\end{keywords}

 \begin{AMS} 90C26; 90C15; 65K05
\end{AMS}

\section{Introduction}\label{Sec1}
Newton’s method is one of the most common optimization algorithms and is well-known for its fast local quadratic convergence. Global optimization of non-convex objective functions is of increasing importance throughout the fields of science and engineering and is a major component in machine learning. This generates interest in randomized methods where the noise helps to avoid trapping in local minima and saddle points. The randomness typically comes from added noise or from randomized sub-sampling of the objective or loss function. 
 
Properties of stochastic Newton with variants have been studied previously, for instance, in \cite{Boyer2023, Wang2017, Roosta2019, Martin2012}. The proofs of convergence for many of these ideas rely on classical ideas from Robbins-Monro \cite{Robbins1951} or on applying the Robbins-Siegmund theorem for “almost” supermartingale type processes. In \cite{Li2022}, a general framework for a wide class of stochastic optimization methods is presented. Many of these results rely on the underlying cost function being convex or they prove almost sure convergence to a stationary point rather than to the global minimizer.
 
Convergence naturally requires the random term to vanish in the limit and for a global minimization the term cannot vanish too fast to avoid trapping in local minima. The classical papers \cite{Geman1986, Chiang1987} prove slow convergence rate with the random term decaying logarithmically with time or number of iterations for stochastic gradient descent in the general nonconvex case. The paper \cite{Chiang1987} also argues that almost sure convergence is not possible for such decay mechanism. 
 
To achieve almost sure convergence, we assume the unique optimal value of the objective function to be known so the random term can be scaled by the value of the objective function rather than by time. Knowing this value is quite common, for instance, in the classic problem of fitting a high-degree polynomial against a low number of datapoints. Similarly, the assumption holds true for optimization problems in machine learning involving overparameterized neural networks. The objective function or loss minimum is 0, although the minimum may be obtained at various minimizers. In \cite{Wojtowytsch2023, Engquist2024} convergence to the global minimizer in a nonconvex setting was shown for stochastic gradient descent based on a known or estimated minimal value and state dependent variance of the noise.
 
The rest of the paper contains the precise formulation of the convergence theorem and its proof. The proof is divided into two parts. In the exploration part we show that with any starting value the iterates will enter a small neighborhood of the global minimum after a finite number of steps with probability 1. In the so-called exploitation phase we show that starting in the neighborhood of the global minimum the iterates converge quadratically to the minimum with positive probability. Combining these properties finalizes the proof of almost sure quadratic convergence.

\section{Statement of Results and Proofs}
We consider the periodic setting $\Omega=\T^d$. The goal is to prove global almost sure convergence to the unique minimizer. The proof below is organized around two separate effects. Away from the minimizer, the noise is nondegenerate and therefore gives a uniform probability of entering any prescribed neighborhood of $x^*$. Near the minimizer, the deterministic part is the Newton map, and the state-dependent noise is small enough that there is a positive probability of remaining in a shrinking sequence of balls. Repeated entrances into the local basin then force one successful entrance almost surely.

\subsection*{Assumptions}
We assume:
\begin{itemize}
    \item[\textbf{A1}] $f\in C^2(\T^d)$ and $f$ has a unique global minimizer $x^*\in\T^d$. We normalize so that $f_{min} = f(x^*)=0$.

    \item[\textbf{A2}] There exist constants $r_0,m,L>0$ such that, on $B_{r_0}(x^*)$, one has
    \begin{gather*}
        mI_d\preceq H(x),
        \qquad
        \norm{H(x)-H(y)}\leq L\norm{x-y},
        \qquad x,y\in B_{r_0}(x^*).
    \end{gather*}
\end{itemize}
Assumption \textbf{A1} is the global nonconvex assumption and implies that for any $x\neq x^*$ such that $\nabla f(x) = 0$, we have $f(x) - f(x^*) > 0$. Assumption \textbf{A2} says that there is a sufficiently nice neighborhood of the minimizer. Throughout this section, we choose $r_0$ smaller if necessary so that the ball $B_{r_0}(x^*)$ may be identified with an ordinary Euclidean ball in a single coordinate chart of the torus. All additions below are understood modulo the periodic identification of $\T^d$.
\\
\begin{remark}\label{remark:infinitedomain}
    The assumptions of bounded domain and periodic boundary conditions are used explicitly in the exploration phase of the proofs. The boundary conditions could potentially be replaced by reflection. This will only affect the proof slightly, but the algorithm itself will be much more complex to accommodate multiple reflections. It would also be possible to handle an infinite domain but then the objective function and noise bound should be of a specific form in the far field such that the iterates will enter a fixed finite domain after a finite number of steps.
\end{remark}
\\
\begin{proposition}\label{prop:torus-quadratic-bounds}
Under \textbf{A1}--\textbf{A2}, if we define
\begin{gather*}
    M_0:=\sup_{y\in B_{r_0}(x^*)}\norm{H(y)},
\end{gather*}
then $M_0<\infty$ and
\begin{gather}\label{eq:torus-quadratic-bounds}
    \frac{m}{2}\norm{x-x^*}^2\leq f(x)\leq \frac{M_0}{2}\norm{x-x^*}^2,
    \qquad \forall x\in B_{r_0}(x^*).
\end{gather}
\end{proposition}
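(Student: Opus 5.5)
The plan is to apply Taylor's theorem with integral remainder along the segment from $x^*$ to $x$. That segment lies in $B_{r_0}(x^*)$ because the ball is convex and, by the choice of $r_0$, sits inside a single Euclidean chart. We then compare the Hessian with the two bounds it satisfies on the ball.

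First, finiteness of $M_0$. The Lipschitz bound in \textbf{A2} gives
\[
\norm{H(y)}\le \norm{H(x^*)}+L\norm{y-x^*}\le \norm{H(x^*)}+Lr_0 .
\]
Alternatively, $f\in C^2(\T^d)$ with $\T^d$ compact already makes $H$ bounded. Either way $M_0<\infty$.

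Next, the first-order terms vanish. By \textbf{A1}, $x^*$ is a global minimizer of a $C^1$ function, so $\nabla f(x^*)=0$, and the normalization gives $f(x^*)=0$. For $x\in B_{r_0}(x^*)$ set $h=x-x^*$. Taylor's formula then reads
\[
f(x)=\int_0^1 (1-t)\, h^{\top}H(x^*+th)\,h\,dt .
\]
For each $t\in[0,1]$ the point $x^*+th$ lies in $B_{r_0}(x^*)$. Hence \textbf{A2} gives $h^\top H h\ge m\norm{h}^2$, and the definition of $M_0$ gives $h^\top H h\le \norm{H}\norm{h}^2\le M_0\norm{h}^2$. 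Since $\int_0^1(1-t)\,dt=\tfrac12$, integrating these bounds yields \eqref{eq:torus-quadratic-bounds}.

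There is no real obstacle. The only point needing care is that the segment stays inside the chart, so that $f$ can be treated as an ordinary $C^2$ function on a Euclidean ball and the norm $\norm{x-x^*}$ is the chart distance. This is exactly what the stated shrinking of $r_0$ guarantees.
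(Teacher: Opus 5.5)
Your proposal is correct and follows the paper's own proof: Taylor's formula with integral remainder about $x^*$ (using $\nabla f(x^*)=0$ and $f(x^*)=0$), bounding $h^\top H h$ between $m\norm{h}^2$ and $M_0\norm{h}^2$, and integrating against $\int_0^1(1-t)\,dt=\tfrac12$. The only difference is cosmetic: you justify $M_0<\infty$ via the Lipschitz bound or compactness of $\T^d$, where the paper uses continuity of $H$ on the closed ball; all three work.
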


\begin{proof}
Since $H$ is continuous on $\overline{B_{r_0}(x^*)}$, the quantity $M_0$ is finite. Also, $\nabla f(x^*)=0$ because $x^*$ is a minimizer. Taylor's theorem with integral remainder gives
\begin{gather*}
    f(x)=\int_0^1(1-t)(x-x^*)^T H(x^*+t(x-x^*))(x-x^*)\,dt,
    \qquad x\in B_{r_0}(x^*).
\end{gather*}
The segment $x^*+t(x-x^*)$ remains inside $B_{r_0}(x^*)$. Using $mI_d\preceq H$ and $\norm{H}\leq M_0$ on this ball, we obtain
\begin{gather*}
    m\norm{x-x^*}^2
    \leq
    (x-x^*)^T H(x^*+t(x-x^*))(x-x^*)
    \leq
    M_0\norm{x-x^*}^2.
\end{gather*}
Integrating in $t$ and using $\int_0^1(1-t)\,dt=1/2$ gives the claim.
\end{proof}

We consider the stochastic iterative scheme
\begin{gather}\label{eq:torus-scheme}
    X_{n+1}= \begin{cases}
        X_n- H(X_n)^{-1}\nabla f(X_n)+ f(X_n)^\alpha Z_n, &H(X_n)\ \text{invertible},\\
        X_n + f(X_n)^\alpha Z_n, &\text{otherwise},
    \end{cases}
\end{gather}
where $\alpha>\frac12$ and $Z_n\sim N(0,I_d)$ are independent. Here the definition of the second branch of the scheme is necessary as the function $f$ may have a degenerate Hessian at (possibly) several points in its domain. However, it will turn out that this definition will only be necessary for proving entrance into a ``nice enough" neighborhood on which the Hessian will be non-singular. Note that on the torus, after the Euclidean update we take the periodic representative of the resulting point.

As an aside, the scheme \eqref{eq:torus-scheme} is reminiscent of stochastic gradient descent schemes of the form \cite{Robbins1951}
\begin{gather}\label{eq:SGDalg}
    X_{n+1} = X_n - \eta(t)G(X_{n-1}; \xi_{n})
\end{gather}
where $\{\xi_i\}$ are independent random variables, $\eta(t)$ denotes a step-size, and $G$ is some unbiased approximation of the gradient $\nabla f$. These schemes generally converge under decaying step sizes, e.g. $\eta = \eta_0/n$ \cite{Wang2023} with convergence rate $O(1/n)$ in expectation when $f$ is strongly convex. Unlike this result, we will prove Theorem \ref{thm:main-periodic} which gives a path-wise type quadratic convergence rate. Moreover, our scheme \eqref{eq:torus-scheme} will converge to the global minimizer of the (possibly) nonconvex function $f$, unlike \eqref{eq:SGDalg} which generally converges to stationary points $\tilde{x}$ (i.e. $\nabla f(\tilde{x}) = 0$).

Before proving Theorem \ref{thm:main-periodic}, we will need to prove a sequence of lemmas. We first prove an estimate on the ``deterministic" step.
\begin{lemma}[Local Newton estimate]\label{lem:torus-newton-estimate}
Define the deterministic Newton map
\begin{gather*}
    T(x):=x-H(x)^{-1}\nabla f(x)
\end{gather*}
on $B_{r_0}(x^*)$. Then for every $x\in B_{r_0}(x^*)$,
\begin{gather}\label{eq:quadratic-newton-estimate}
    \norm{T(x)-x^*}\leq \frac{L}{2m}\norm{x-x^*}^2.
\end{gather}
Consequently, for every $q\in(0,1)$ there exists $r\in(0,r_0)$ such that
\begin{gather}\label{eq:linear-newton-estimate}
    \norm{T(x)-x^*}\leq q\norm{x-x^*},
    \qquad \forall x\in B_r(x^*),
\end{gather}
and hence $T(B_r(x^*))\subset B_r(x^*)$.
\end{lemma}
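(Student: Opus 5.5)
The plan is the classical Newton analysis, carried out inside the chart in which $B_{r_0}(x^*)$ is a Euclidean ball. Fix $x\in B_{r_0}(x^*)$. By \textbf{A2}, $H(x)\succeq mI_d$, so $H(x)$ is invertible and $\norm{H(x)^{-1}}\leq 1/m$; in particular $T$ is well defined on the ball. Since $x^*$ is a minimizer, $\nabla f(x^*)=0$. Because the segment from $x^*$ to $x$ lies in the convex ball, the fundamental theorem of calculus gives
\begin{gather*}
    \nabla f(x)=\nabla f(x)-\nabla f(x^*)=\int_0^1 H(x^*+t(x-x^*))(x-x^*)\,dt .
\end{gather*}

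Next I would write the Newton error as a single integral:
\begin{gather*}
    T(x)-x^* = H(x)^{-1}\Big(H(x)(x-x^*)-\nabla f(x)\Big)
    = H(x)^{-1}\int_0^1\Big(H(x)-H(x^*+t(x-x^*))\Big)(x-x^*)\,dt .
\end{gather*}
The Lipschitz bound in \textbf{A2} applies because both points lie in $B_{r_0}(x^*)$. It gives
\begin{gather*}
    \norm{H(x)-H(x^*+t(x-x^*))}\leq L(1-t)\norm{x-x^*}.
\end{gather*}
Integrating, with $\int_0^1(1-t)\,dt=1/2$, and multiplying by $\norm{H(x)^{-1}}\leq 1/m$ yields \eqref{eq:quadratic-newton-estimate}.

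For the linear estimate, given $q\in(0,1)$ I would choose
\begin{gather*}
    r:=\min\{r_0/2,\;2mq/L\}\in(0,r_0).
\end{gather*}
Then for every $x\in B_r(x^*)$,
\begin{gather*}
    \norm{T(x)-x^*}\leq \frac{L}{2m}\norm{x-x^*}\cdot\norm{x-x^*}\leq \frac{L r}{2m}\norm{x-x^*}\leq q\norm{x-x^*},
\end{gather*}
which is \eqref{eq:linear-newton-estimate}. Since $q<1$, this gives $\norm{T(x)-x^*}<r$, so $T(B_r(x^*))\subset B_r(x^*)$. The resulting point lies in the chart, so no periodic identification issue arises.

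No step here is a real obstacle. The only points needing care are the following:
\begin{itemize}
    \item the integral representation of $\nabla f$ must be applied inside the single Euclidean chart, where convexity of the ball keeps the segment inside it;
    \item invertibility of $H(x)$ comes from the uniform lower bound $mI_d\preceq H(x)$ in \textbf{A2};
    \item $r$ must be taken strictly less than $r_0$ so that the statement $r\in(0,r_0)$ holds.
\end{itemize}
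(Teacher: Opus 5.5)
Your proof is correct and follows the same route as the paper. The paper defers the quadratic estimate to the classical textbook argument that you write out in full: the integral form of $\nabla f(x)-\nabla f(x^*)$, the Lipschitz Hessian, and $\norm{H(x)^{-1}}\le 1/m$. It then chooses $r$ with $\frac{L}{2m}r\le q$ exactly as you do, and your explicit $r=\min\{r_0/2,\,2mq/L\}$ also secures $r<r_0$, which the paper leaves implicit.
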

\begin{proof}
The proof of \eqref{eq:quadratic-newton-estimate} follows from classical proofs in optimization texts, for instance \cite{Nocedal2006}. If $r>0$ is chosen so that $(L/(2m))r\leq q$, then \eqref{eq:linear-newton-estimate} follows immediately. The inclusion $T(B_r(x^*))\subset B_r(x^*)$ follows because $q<1$.
\end{proof}

We now choose, once and for all, a radius $r\in(0,r_0)$ small enough so that
\begin{gather}\label{eq:choice-local-radius}
    \frac{L}{2m}r\leq \frac14,
    \qquad
    f(x)\leq \frac12,\ \qquad \forall x\in B_r(x^*).
\end{gather}
The first condition gives a strict Newton contraction on $B_r(x^*)$, while the second condition is only a convenient normalization of the size of the objective in the local basin.

\begin{lemma}[Entrance into the local basin]\label{lem:torus-entrance}
Let
\begin{gather*}
    \tau_r:=\inf\{n\geq 0:X_n\in B_r(x^*)\}.
\end{gather*}
Then
\begin{gather*}
    \bbP{\tau_r<\infty}=1.
\end{gather*}
In particular, there exists $p_*>0$ such that
\begin{gather}\label{eq:entrance-geometric-tail}
    \bbP{\tau_r>n}\leq (1-p_*)^n,
    \qquad n\geq 0.
\end{gather}
\end{lemma}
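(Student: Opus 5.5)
The plan is to show a uniform one-step lower bound: there exists $p_*>0$ such that for every $x\in\T^d\setminus B_r(x^*)$,
\begin{gather*}
    \bbP{X_{n+1}\in B_r(x^*)\mid X_n=x}\geq p_*.
\end{gather*}
Given this, the Markov property and independence of the $Z_n$ yield \eqref{eq:entrance-geometric-tail} by induction. Indeed, $\{\tau_r>n+1\}\subset\{\tau_r>n\}\cap\{X_{n+1}\notin B_r(x^*)\}$, and conditioning on $\mathcal F_n=\sigma(Z_0,\dots,Z_{n-1})$ on the event $\{\tau_r>n\}\in\mathcal F_n$ gives $\bbP{\tau_r>n+1}\leq(1-p_*)\bbP{\tau_r>n}$. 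Letting $n\to\infty$ then gives $\bbP{\tau_r<\infty}=1$.

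For the one-step bound, fix $x\notin B_r(x^*)$. Write the update as $X_{n+1}=D(x)+f(x)^\alpha Z_n$ modulo $\T^d$, where $D(x)$ is either $T(x)$ or $x$ according to the branch. By \textbf{A1}, $f$ is continuous with a unique zero at $x^*$ on the compact set $K:=\T^d\setminus B_r(x^*)$. Hence $f_{\min,K}:=\min_K f>0$, and $f_{\max}:=\max_{\T^d}f<\infty$. So $\sigma:=f(x)^\alpha\in[\sigma_-,\sigma_+]$ with $0<\sigma_-\le\sigma_+<\infty$. Pick any Euclidean lift $y$ of $D(x)$, and choose a lift $c$ of $x^*$ within distance $\sqrt d$ (the diameter of a unit cell, assuming the torus has unit period; otherwise use its diameter) of $y$. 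Then
\begin{gather*}
    \bbP{X_{n+1}\in B_r(x^*)}\geq \bbP{y+\sigma Z\in B_r(c)}
    =\int_{B_r(c)}(2\pi\sigma^2)^{-d/2}e^{-\norm{z-y}^2/(2\sigma^2)}\,dz .
\end{gather*}
Since $\norm{z-y}\le\sqrt d+r$ on $B_r(c)$, the integrand is at least $(2\pi\sigma_+^2)^{-d/2}e^{-(\sqrt d+r)^2/(2\sigma_-^2)}$. Multiplying by $|B_r|$ gives a $p_*>0$ independent of $x$ and of the branch. The key point is that periodicity removes any dependence on where the deterministic map $D(x)$ lands, which can be arbitrary, for instance when the Hessian is nearly singular.

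The main obstacle I expect is a measurability and well-definedness issue rather than the estimate itself. The map $x\mapsto T(x)$ is only defined where $H$ is invertible and may be unbounded near singular points. This does no harm here, because the bound above is uniform in the lift $y$ and so holds for any value of $D(x)$. I will also check that the branch choice is a Borel function of $X_n$, since $\{x:\det H(x)\ne0\}$ is open, so the conditional probability argument is legitimate.

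For the bound at $n=0$, note $\bbP{\tau_r>0}\le1$, and if $X_0\in B_r(x^*)$ then $\tau_r=0$. The induction therefore starts trivially, and the geometric tail follows.
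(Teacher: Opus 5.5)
Your proposal is correct and follows the paper's proof. Both obtain a uniform one-step entrance probability $p_*>0$ from the gap $\min_{\T^d\setminus B_r(x^*)} f>0$, the bounded noise variance, and periodicity, and then apply the Markov/tower-property recursion $\bbP{\tau_r>n+1}\le(1-p_*)\bbP{\tau_r>n}$; your explicit lift-and-Gaussian-density estimate, together with the remark that the drift $D(x)$ may land anywhere, just fills in the step the paper only asserts, namely that the periodized density is bounded below uniformly.
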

\begin{proof}
By continuity of $f$ and compactness of $\T^d$, for every $0<r<r_0$ we have
\begin{gather}\label{eq:positive-gap-outside-ball}
    b_r:=\min\{f(x):x\in \T^d\setminus B_r(x^*)\}>0.
\end{gather}
The strict positivity follows from the uniqueness of the global minimizer and the normalization $f(x^*)=0$. On $\T^d\setminus B_r(x^*)$, the objective is bounded below by $b_r>0$ from \eqref{eq:positive-gap-outside-ball}. Hence the noise amplitude satisfies
\begin{gather*}
    f(y)^\alpha\geq b_r^\alpha,
    \qquad y\in \T^d\setminus B_r(x^*).
\end{gather*}
For each fixed $y\notin B_r(x^*)$, the one-step transition law has a periodized Gaussian density on $\T^d$. Moreover, since $f$ is bounded above on $\T^d$ and bounded below by $b_r$ on $\T^d\setminus B_r(x^*)$, the variance of the Gaussian perturbation is bounded above and below whenever the current state lies outside $B_r(x^*)$. Hence the corresponding periodized Gaussian density admits a strictly positive lower bound on $\T^d$, uniformly over all such current states. Therefore there exists $p_*>0$ such that
\begin{gather*}
    \inf_{y\in \T^d\setminus B_r(x^*)}
    \bbP{X_{n+1}\in B_r(x^*)\mid X_n=y}
    \geq p_*.
\end{gather*}
Now
\begin{gather*}
    \{\tau_r>n+1\}=\{\tau_r>n\}\cap \{X_{n+1}\notin B_r(x^*)\}.
\end{gather*}
Using the tower property and the Markov property,
\begin{align*}
    \bbP{\tau_r>n+1}
    &=\E{\mathbf 1_{\{\tau_r>n\}}\bbP{X_{n+1}\notin B_r(x^*)\mid \mathcal F_n}}\\
    &=\E{\mathbf 1_{\{\tau_r>n\}}\bbP{X_{n+1}\notin B_r(x^*)\mid X_n}}\\
    &\leq (1-p_*)\bbP{\tau_r>n}.
\end{align*}
Iteration gives \eqref{eq:entrance-geometric-tail}, and therefore $\bbP{\tau_r<\infty}=1$.
\end{proof}

The next lemma is the precise shrinking-ball mechanism. It says that once the process enters the local Newton basin, there is a uniformly positive probability that it remains in a deterministic sequence of smaller and smaller balls.

Before we do this, we first prove a proposition that will be needed in the proof.
\begin{proposition}[Tail bound on Gaussian Steps]\label{prop:tailbound}
    Let $\{Y_i\}_{i=1}^d$ be independent and identically distributed standard Gaussian random variables (i.e. $\mu = 0$ and $\sigma = 1$). Then letting $Z = \sum_{i=1}^d Y_i^2$ we have for any $\varepsilon > 0$
    \begin{gather*}
        \bbP{Z > \varepsilon} \leq 2^{d/2}e^{-\frac{\varepsilon}{4}}.
    \end{gather*}
\end{proposition}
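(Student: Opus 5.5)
We prove the bound with the exponential Markov (Chernoff) inequality applied to the chi-squared variable $Z$, using the parameter $\lambda = 1/4$. For $\lambda\in(0,1/2)$ the moment generating function of a single squared standard Gaussian is finite:
\begin{gather*}
    \E{e^{\lambda Y_i^2}} = \frac{1}{\sqrt{2\pi}}\int_{\mathbb{R}} e^{-(1-2\lambda)y^2/2}\,dy = (1-2\lambda)^{-1/2}.
\end{gather*}
This follows by completing the Gaussian integral, i.e.\ rescaling $y\mapsto y/\sqrt{1-2\lambda}$. At $\lambda=1/4$ it gives $\E{e^{Y_i^2/4}} = 2^{1/2}$.

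Next we use independence of the $Y_i$ to factor the moment generating function of $Z$:
\begin{gather*}
    \E{e^{Z/4}} = \prod_{i=1}^d \E{e^{Y_i^2/4}} = 2^{d/2}.
\end{gather*}

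Finally, since $t\mapsto e^{t/4}$ is increasing and positive, Markov's inequality applied to $e^{Z/4}$ yields
\begin{gather*}
    \bbP{Z>\varepsilon}\leq \bbP{e^{Z/4}\geq e^{\varepsilon/4}} \leq e^{-\varepsilon/4}\,\E{e^{Z/4}} = 2^{d/2}e^{-\varepsilon/4},
\end{gather*}
which is the claimed bound for every $\varepsilon>0$.

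The only point needing care is the Gaussian integral computation together with the restriction $\lambda<1/2$, which guarantees integrability. The choice $\lambda=1/4$ is not optimal, but it is exactly the value that produces the constants $2^{d/2}$ and $1/4$ in the statement. Everything else is routine.
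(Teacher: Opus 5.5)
Your proof is correct and follows the paper's argument: Chernoff's inequality with the chi-squared moment generating function $(1-2\lambda)^{-d/2}$, evaluated at $\lambda = 1/4$. You also derive the single-variable moment generating function and its factorization via independence explicitly, which the paper takes for granted.
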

\begin{proof}
    Since $Y_i \sim N(0,1)$, then $Z_i = Y_i^2$ is a chi-squared random variable with $1$ degree of freedom. By independence, then $Z = \sum_{i=1}^d Z_i$ is a chi-squared random variable with $d$ degrees of freedom. Then by Chernoff's inequality
    \begin{align*}
        \bbP{Z > \varepsilon} &= \bbP{e^{\lambda Z} > e^{\lambda \varepsilon}} \leq \frac{1}{e^{\lambda \varepsilon}}(1-2\lambda)^{-d/2}.
    \end{align*}
    We may choose any $\lambda < \frac12$ so we choose $\lambda = \frac14$ to obtain the bound
    \begin{gather*}
        \bbP{Z > \varepsilon} \leq 2^{d/2}e^{-\frac{\varepsilon}{4}}.
    \end{gather*}
    \end{proof}

    \begin{lemma}[Quadratic Newton tube]\label{lem:quadratic-newton-tube}
Suppose $\alpha > 1$. There exist constants $Q > \frac{L}{2m}$ and $p_{\mathrm{s}}>0$ with the following property: For every deterministic time $N$ and every state $X_N\in B_r(x^*)$,
\begin{gather}\label{eq:quadratic-success-probability}
    \bbP{\norm{X_{N+k}-x^*}\leq a_k\ \text{for all }k\geq 0\mid \mathcal F_N}
    \geq p_{\mathrm{s}}.
\end{gather}
where $a_0 = r$ and $a_{k+1} = Qa_k^2$ for $k \geq 1$. Consequently, with conditional probability at least $p_{\mathrm{s}}$, one has
\begin{gather*}
    X_{N+k}\to x^*
\end{gather*}
without leaving $B_r(x^*)$.
\end{lemma}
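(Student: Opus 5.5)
The plan is an induction along the deterministic radii $a_k$. I will show that a suitable intersection of events involving only the future Gaussian increments $Z_{N},Z_{N+1},\dots$ forces $\norm{X_{N+k}-x^*}\leq a_k$ for every $k$. Since these increments are independent of $\mathcal F_N$, the conditional probability of that intersection is a deterministic infinite product, which I then show is positive.

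\emph{Choice of $Q$.} I take $Q:=L/m$, which is strictly larger than $L/(2m)$. By \eqref{eq:choice-local-radius} this gives $Qr\leq \tfrac12$. Hence $Qa_k\leq (Qr)^{2^k}\leq 2^{-2^k}$. In particular $a_k\leq r<r_0$ for all $k$ and $a_k\to 0$ doubly exponentially. Reading the recursion as $a_{k+1}=Qa_k^2$ for all $k\geq 0$, all the $a_k$-balls lie in the single chart $B_{r_0}(x^*)$, so the periodic identification plays no role.

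\emph{Inductive step.} Suppose $\norm{X_{N+k}-x^*}\leq a_k$. Since $a_k\leq r<r_0$, the Hessian $H(X_{N+k})$ is invertible by \textbf{A2}, so the first branch of \eqref{eq:torus-scheme} applies. Lemma \ref{lem:torus-newton-estimate} gives
\begin{gather*}
\norm{T(X_{N+k})-x^*}\leq \tfrac{L}{2m}a_k^2=\tfrac12 a_{k+1}.
\end{gather*}
Proposition \ref{prop:torus-quadratic-bounds} gives $f(X_{N+k})^\alpha\leq (M_0/2)^\alpha a_k^{2\alpha}$. Consequently $\norm{X_{N+k+1}-x^*}\leq a_{k+1}$ holds as soon as
\begin{gather*}
(M_0/2)^\alpha a_k^{2\alpha}\norm{Z_{N+k}}\leq \tfrac12 Q a_k^2 .
\end{gather*}
This is implied by the event
\begin{gather*}
E_k:=\bigl\{\norm{Z_{N+k}}^2\leq \varepsilon_k\bigr\},\qquad \varepsilon_k:=c\,a_k^{-(4\alpha-4)},\qquad c:=\bigl(Q/(2(M_0/2)^\alpha)\bigr)^2 .
\end{gather*}
This is the point where $\alpha>1$ is used: the thresholds satisfy $\varepsilon_k\to\infty$ doubly exponentially. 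The base case $k=0$ is immediate because $X_N\in B_r(x^*)$ means $\norm{X_N-x^*}<r=a_0$. By induction, $\bigcap_{k\geq 0}E_k$ is contained in the tube event of \eqref{eq:quadratic-success-probability}.

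\emph{Probability bound.} The $Z_{N+k}$ are i.i.d.\ $N(0,I_d)$ and independent of $\mathcal F_N$, so
\begin{gather*}
\bbP{\textstyle\bigcap_k E_k\mid\mathcal F_N}=\prod_{k\geq0}\bigl(1-\bbP{\norm{Z}^2>\varepsilon_k}\bigr)=:p_{\mathrm s}.
\end{gather*}
Each factor is strictly positive, because a Gaussian puts positive mass on every ball. By Proposition \ref{prop:tailbound}, $\sum_k\bbP{\norm{Z}^2>\varepsilon_k}\leq 2^{d/2}\sum_k e^{-\varepsilon_k/4}<\infty$, since $\varepsilon_k$ grows doubly exponentially. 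A product of factors in $(0,1]$ whose complements are summable is strictly positive, so $p_{\mathrm s}>0$. Moreover $p_{\mathrm s}$ is independent of $N$ and of $X_N$. The convergence statement follows at once: on the tube event, $\norm{X_{N+k}-x^*}\leq a_k\leq r$ with $a_k\to 0$.

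The main obstacle is the bookkeeping in the inductive step. One has to check that the Newton error and the noise each fit into half of $a_{k+1}$ with a single constant $Q$ that works simultaneously for $Qr<1$ and $Q>L/(2m)$. One also has to check that the resulting thresholds $\varepsilon_k$ grow fast enough for the Chernoff tails to be summable; this is exactly where $\alpha>1$ (and not merely $\alpha>\tfrac12$) is needed. The early factors, where $\varepsilon_k$ may be small and the tail bound is useless, are handled by strict positivity alone.
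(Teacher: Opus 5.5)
Your proof is correct and follows essentially the same route as the paper. Both arguments run an induction along $a_{k+1}=Qa_k^2$ with noise budget $a_{k+1}-\frac{L}{2m}a_k^2$, reduce to a threshold $\norm{Z_{N+k}}\lesssim a_k^{2-2\alpha}\to\infty$, use the Chernoff tail of Proposition~\ref{prop:tailbound} for summability, and use independence from $\mathcal F_N$ to make the infinite product a positive constant. Your fixed choice $Q=L/m$, which gives $Qr\le\frac12$ from the radius already chosen, is slightly cleaner than the paper's: the paper re-shrinks $r$ to get $Qr\le 1$, which in the borderline case $Qr=1$ gives $a_k\equiv r$, and your exact formula $Qa_k=(Qr)^{2^k}$ replaces the paper's claim $a_k\le a_0^k$, which does not hold in general.
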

\begin{proof}
Fix $N \geq 1$. Let
\begin{gather*}
    C_0:=\frac{L}{2m},
    \qquad
    K_0:=\left(\frac{M_0}{2}\right)^\alpha.
\end{gather*}
Choose any $Q> C_0$ and set $a_0 = r$ and $a_{k+1} = Qa_k^2$. We shrink $r < r_0$ such that
\begin{gather*}
    Qr \leq 1\quad \text{and}\quad f(x) \leq \frac12,\quad \forall x \in B_r(x^*).
\end{gather*}
Consequently, $a_1 = Qa_0^2 \leq a_0 \leq r$. Applying induction yields that for all $k \geq 0$ we have $a_k \leq a_0^k$.

Thus, if
\begin{gather}\label{eq:quadratic-tube-noise-condition}
    K_0a_k^{2\alpha}\norm{Z_{N+k}}\leq a_{k+1}-C_0a_k^2,
\end{gather}
then on the event $\{\|X_{N+k} - x^*\| \leq a_k\}$ the inequalities \eqref{eq:quadratic-newton-estimate} and \eqref{eq:quadratic-tube-noise-condition} imply
\begin{align*}
    \norm{X_{N+k+1}-x^*}
    &\leq \norm{T(X_{N+k})-x^*}+f(X_{N+k})^\alpha\norm{Z_{N+k}}\\
    &\leq C_0a_k^2 + K_0a_k^{2\alpha}\|Z_{N+k}\| \\
    &\leq C_0a_k^2+a_{k+1}-C_0a_k^2
    =a_{k+1}.
\end{align*}
To guarantee \eqref{eq:quadratic-tube-noise-condition}, we require
\begin{gather*}
    \|Z_{N+k}\| \leq \frac{Q-C_0}{K_0}a_k^{2-2\alpha}.
\end{gather*}
Since $\alpha>1$, the exponent $2-2\alpha$ is negative, and hence the right-hand side tends to $\infty$ as $k\to\infty$. Set $R_k =: \frac{Q-C_0}{K_0}a_k^{2-2\alpha}$. By Proposition \ref{prop:tailbound} there exist constants $C_1,C_2 > 0$ such that
\begin{gather*}
    \bbP{\|Z_{N+k}\| > R_k} = \bbP{\|Z_{N+k}\|^2 > R_k^2} \leq C_1e^{-C_2a_k^{2-2\alpha}}.
\end{gather*}
It follows that
\begin{gather*}
    \sum_{k=0}^\infty \bbP{\norm{Z_{N+k}}>R_k}<\infty.
\end{gather*}
Consequently, there exists $I_0\in \mathbb{N}$ such that for all $i \geq I_0$
\begin{gather*}
    \bbP{\|Z_i\| \geq R_i} < \frac12,\quad \forall i \geq I_0.  
\end{gather*}
Using the inequality $\log(1-x) \geq -2x$ for all $0< x < \frac12$ gives
\begin{gather*}
    \sum_{k=I_0}^\infty \log \bbP{\|Z_k\| \leq R_k} \geq -2\sum_{k=I_0}^\infty \bbP{\|Z_k\| > R_k}  > -\infty.
\end{gather*}
Thus, adding back in the first $I_0-1$ terms and exponentiating the sum shows that
\begin{gather*}
    p_{\mathrm{s}}:=\prod_{k=0}^\infty \bbP{\norm{Z_k}\leq R_k}>0.
\end{gather*}
The independence of the $Z_n$ then implies that, conditionally on $\mathcal F_N$, the event
\begin{gather*}
    \mathcal S_N:=\bigcap_{k=0}^\infty \{\norm{Z_{N+k}}\leq R_k\}
\end{gather*}
has probability at least $p_{\mathrm{s}}$, i.e. on the event $\{X_N \in B_r(x^*)\}$ we have
\begin{gather*}
    \bbP{\mathcal{S}_N\ |\ \mathcal{F}_N} \geq p_s,\quad \forall N\geq 1. 
\end{gather*}
On $\mathcal S_N$, we apply induction to obtain
\begin{gather*}
    \norm{X_{N+k}-x^*}\leq a_k,
    \qquad k\geq 0.
\end{gather*}
Since $a_k\to 0$, this proves the lemma.
\end{proof}
\\
\begin{remark}\label{rem1}
    We note that $\alpha > 1$ appears quite naturally in the proof where we require the noise to generally be bounded, but at the cost of increasing the noise level, i.e. $\alpha > 1$. It is tempting to use a similar proof for $\alpha = 1$ as this implies the noise is on the same level as the Newton term. Unfortunately, the same proof strategy does not work for $\alpha = 1$ as this would require the $Z_k$ to stay within a bounded ball that is not shrinking.
\end{remark}\\

We shall also use the following simple observation. It handles the case where the process enters $B_r(x^*)$ and never leaves, even if it is not inside the particular shrinking tube constructed above.

\begin{lemma}[No exit implies local convergence]\label{lem:no-exit-local-convergence}
Let $\alpha > \frac12$ and $N$ be a deterministic or almost surely finite stopping time such that $X_N\in B_r(x^*)$. On the event
\begin{gather*}
    E_N:=\{X_{N+k}\in B_r(x^*)\ \text{for all }k\geq 0\},
\end{gather*}
one has
\begin{gather*}
    X_{N+k}\to x^*
    \qquad \text{almost surely on }E_N.
\end{gather*}
\end{lemma}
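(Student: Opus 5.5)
On the event $E_N$ every iterate $X_{N+k}$ lies in $B_r(x^*)$. There the Hessian is invertible, since $mI_d\preceq H$, so the first branch of \eqref{eq:torus-scheme} always applies. We can therefore write
\begin{gather*}
    X_{N+k+1}-x^* = \bigl(T(X_{N+k})-x^*\bigr) + f(X_{N+k})^\alpha Z_{N+k}.
\end{gather*}
Let $e_k:=\norm{X_{N+k}-x^*}$. Lemma \ref{lem:torus-newton-estimate} with the choice \eqref{eq:choice-local-radius} gives $\norm{T(x)-x^*}\le \tfrac14\norm{x-x^*}$ on $B_r(x^*)$. Proposition \ref{prop:torus-quadratic-bounds} gives $f(x)^\alpha\le K_0\norm{x-x^*}^{2\alpha}$. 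Together these yield the pathwise recursion
\begin{gather*}
    e_{k+1}\le \Bigl(\tfrac14 + K_0 e_k^{2\alpha-1}\norm{Z_{N+k}}\Bigr)e_k
    \le \Bigl(\tfrac14 + K_0 r^{2\alpha-1}\norm{Z_{N+k}}\Bigr)e_k
    \qquad\text{on }E_N.
\end{gather*}
Here $\alpha>\tfrac12$ is used so that $e_k^{2\alpha-1}\le r^{2\alpha-1}$. This reduces the problem to a multiplicative random recursion with i.i.d. factors $\xi_k:=\tfrac14+c\norm{Z_{N+k}}$, where $c=K_0r^{2\alpha-1}$.

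Iterating the recursion gives $e_k\le r\prod_{j<k}\xi_j$ on $E_N$. Taking logarithms, it suffices to show $\sum_{j<k}\log\xi_j\to-\infty$ almost surely. By the strong law of large numbers this holds whenever $\E{\log \xi_0}<0$. This is the main obstacle: nothing in the statement bounds $c$, so the sign of $\E{\log\xi_0}$ is not automatic.

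I would first try to shrink $r$ further, which the paper already permits in the proof of Lemma \ref{lem:quadratic-newton-tube}, so that $c\,\E{\norm{Z_0}}\le c\sqrt d$ is small. Then Jensen's inequality gives
\begin{gather*}
    \E{\log\xi_0}\le \log\E{\xi_0}\le\log\bigl(\tfrac14+c\sqrt d\bigr)<0.
\end{gather*}
If shrinking $r$ is not allowed, I would avoid it with a two-scale argument. The bound $e_{k+1}\le(\tfrac14+K_0e_k^{2\alpha-1}\norm{Z_{N+k}})e_k$ shows that once $e_k$ is small, the effective constant is small too. One shows that the set where $e_k$ is below a threshold $\delta$ is visited, and then the same Jensen/SLLN argument applies at scale $\delta$. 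This version is more delicate, so I would state the proof with $r$ shrunk.

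When $N$ is an almost surely finite stopping time rather than deterministic, the strong Markov property makes $(Z_{N+k})_{k\ge0}$ again i.i.d. $N(0,I_d)$ and independent of $\mathcal F_N$. The SLLN therefore still applies, and its null set does not depend on the starting state. We conclude that on $E_N$, outside a null set, $\log e_k\to-\infty$, i.e. $X_{N+k}\to x^*$ almost surely on $E_N$.
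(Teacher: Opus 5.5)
The gap is the one you flag yourself. Your reduction to $e_{k+1}\le\xi_k e_k$ with $\xi_k=\tfrac14+K_0r^{2\alpha-1}\norm{Z_{N+k}}$ is correct. Once $\E{\log\xi_0}<0$, the SLLN step, including the stopping-time remark, also goes through. But the lemma concerns the radius fixed in \eqref{eq:choice-local-radius}, which gives only $\frac{L}{2m}r\le\frac14$ and $f\le\frac12$ on $B_r(x^*)$, and nothing forces $K_0r^{2\alpha-1}\sqrt d<\frac34$.

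Proving the statement for a smaller radius $r'$ does not recover it for $r$, because $E_N$ depends on the radius. The $r'$ version only covers paths that remain in $B_{r'}(x^*)$ forever. The paths you must exclude are those that remain in $B_r(x^*)$ forever but keep returning to $B_r(x^*)\setminus B_{r'}(x^*)$.

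Your two-scale fallback does not close this. You have not shown that $B_\delta(x^*)$ is reached on $E_N$. Even after it is reached, the process can leave $B_\delta(x^*)$ without leaving $B_r(x^*)$. From then on the scale-$\delta$ bound on the factors is lost, so the SLLN cannot simply be restarted at the entrance time. Making this rigorous requires precisely the argument below, at which point the SLLN is superfluous.

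The missing idea is the paper's: use the nondegeneracy of the noise away from $x^*$, not the Newton contraction. For $\varepsilon\in(0,r)$ let $A_\varepsilon=\{x\in B_r(x^*):\norm{x-x^*}\ge\varepsilon\}$. On $A_\varepsilon$, $f$ is bounded below by a positive constant and bounded above. Hence the next iterate has a periodized Gaussian law on $\T^d$ whose density is bounded below uniformly in the current state. Since $\T^d\setminus B_r(x^*)$ has positive measure, every visit to $A_\varepsilon$ gives conditional probability at least $\delta_\varepsilon>0$ of leaving $B_r(x^*)$ at the next step. The strong Markov property at successive visit times then gives $\bbP{E_N\cap\{A_\varepsilon \text{ visited at least } J \text{ times}\}}\le(1-\delta_\varepsilon)^J$. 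So on $E_N$ each $A_\varepsilon$ is visited only finitely often, and taking rational $\varepsilon$ yields convergence. This works for every $r$ and uses neither the Newton step nor $\alpha>\frac12$, only the compactness of $\T^d$.

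Your route is purely local, and for small $r$ it gives more: an almost sure geometric rate $e_k\le r\exp\bigl(k(\E{\log\xi_0}+o(1))\bigr)$ on $E_N$. You could add $K_0r^{2\alpha-1}\sqrt d<\frac34$ to the choice of $r$ at the outset. That is harmless for Theorem \ref{thm:main-periodic}, since Lemma \ref{lem:torus-entrance} holds for every $r\in(0,r_0)$ and Lemma \ref{lem:quadratic-newton-tube} shrinks $r$ anyway. With that addition your argument is a valid alternative; as a proof of the lemma for the given $r$ it is incomplete.
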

\begin{proof}
Fix $\varepsilon\in(0,r)$. We claim that the process cannot visit the annulus
\begin{gather*}
    A_\varepsilon:=\{x\in B_r(x^*):\norm{x-x^*}\geq \varepsilon\}
\end{gather*}
infinitely many times while staying forever inside $B_r(x^*)$.

Indeed, on $A_\varepsilon$ the function $f$ is bounded above and also bounded below by a positive constant. Thus the periodized Gaussian density of the next iterate has a strictly positive lower bound on $\T^d$, uniformly over all current states in $A_\varepsilon$. Since $\T^d\setminus B_r(x^*)$ has positive Lebesgue measure, there exists $\delta_\varepsilon>0$ such that
\begin{gather}\label{eq:annulus-exit-probability}
    \inf_{x\in A_\varepsilon}
    \bbP{X_{n+1}\notin B_r(x^*)\mid X_n=x}
    \geq \delta_\varepsilon.
\end{gather}
Therefore, each time the process lies in $A_\varepsilon$, it has conditional probability at least $\delta_\varepsilon$ of leaving $B_r(x^*)$ at the next step. 

Define the stopping times for $j > 1$:
\begin{align*}\label{eq:annulus-stopping-times}
    \sigma_1 &=: \inf\{k \geq 0: X_{N+k} \in A_\varepsilon\},\\
    \sigma_j &=: \inf\{k> \sigma_{j-1}: X_{N+k} \in A_\varepsilon\}.
\end{align*}
It follows from the strong Markov property and \eqref{eq:annulus-exit-probability}, we have for each $j$
\begin{gather*}
    \bbP{X_{N+\sigma_{j} + 1} \in B_r(x^*)\mid \mathcal{F}_{\tau_j}} \leq 1-\delta_\varepsilon.
\end{gather*}
Since $E_N \cap \{\sigma_1< \infty,\ldots, \sigma_J < \infty\} \subset \bigcap_{j=1}^J \{\sigma_j < \infty: X_{N + \sigma_j + 1} \in B_r(x^*)\}$ then
\begin{gather*}
    \bbP{E_N \cap \{\sigma_1< \infty,\ldots, \sigma_J < \infty\}} \leq (1-\delta_\varepsilon)^J.
\end{gather*}
Therefore, since $E_N \cap \{X_{N+k} \in B_r(x^*)\ i.o.\} = E_N \cap  \bigcap_{J\geq 1} \{\sigma_1< \infty,\ldots, \sigma_J < \infty\}$ then by continuity of probability we have
\begin{align*}
    \bbP{E_N \cap \{X_{N+k} \in B_r(x^*)\ i.o.\}} &= \lim_{J \to \infty} \bbP{E_N \cap \{\sigma_1< \infty,\ldots, \sigma_J < \infty\}} \\
    &\leq \lim_{J\to \infty}(1-\delta_\varepsilon)^J\\
    &= 0.
\end{align*}
Thus, on $E_N$, for every rational $\varepsilon\in(0,r)$, the iterates eventually lie outside $A_\varepsilon$. Equivalently,
\begin{gather*}
    \limsup_{k\to\infty}\norm{X_{N+k}-x^*}\leq \varepsilon
\end{gather*}
for every rational $\varepsilon>0$. Hence $\norm{X_{N+k}-x^*}\to 0$ almost surely on $E_N$.
\end{proof}

We can now combine the entrance mechanism with the local shrinking-tube mechanism.

\begin{theorem}[Global almost sure convergence on $\T^d$]\label{thm:main-periodic}
Assume \textbf{A1}--\textbf{A2}, and let $\{X_n\}$ be defined by \eqref{eq:torus-scheme} with $\alpha>1$. Then for every initial condition $X_0\in\T^d$,
\begin{gather*}
    X_n\to x^*
    \qquad \text{almost surely.}
\end{gather*}
Additionally, for almost every sample path $\omega$ there exists a random time $N$ such that the process $\{X_{N+k}(\omega)\}_{k=0}^\infty$ converges to $x^*$ quadratically.
\end{theorem}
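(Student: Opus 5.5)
The plan is to combine Lemma \ref{lem:torus-entrance} (entrance), Lemma \ref{lem:quadratic-newton-tube} (success with probability at least $p_{\mathrm s}$) and a renewal/Borel--Cantelli argument over successive entrance attempts. Define inductively a sequence of stopping times. Let $\rho_1:=\tau_r$. Let $\theta_1$ be the first time $k\geq 0$ at which the tube event fails after $\rho_1$, that is, the first $k$ with $\norm{X_{\rho_1+k}-x^*}>a_k$. Set $\theta_1=\infty$ if no such $k$ exists. On $\{\theta_1<\infty\}$, let $\rho_2$ be the first entrance time into $B_r(x^*)$ after $\rho_1+\theta_1$, and continue in the same way. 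Two facts make the $\rho_j$ finite whenever the previous attempt fails. First, Lemma \ref{lem:torus-entrance} holds for an arbitrary initial point, since its bound $p_*$ is uniform over starting states. Second, the process is time-homogeneous Markov. Together with the strong Markov property, these give that $\rho_{j+1}<\infty$ almost surely on $\{\theta_j<\infty\}$.

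The key estimate comes from applying Lemma \ref{lem:quadratic-newton-tube} at the stopping time $\rho_j$ rather than at a deterministic $N$. This is legitimate because its proof only uses that $Z_{N+k}$, $k\geq0$, are independent of $\mathcal F_N$ with the standard law, and this persists for stopping times via the strong Markov property. I would first restate the lemma for a.s. finite stopping times. This gives $\bbP{\theta_j<\infty\mid \mathcal F_{\rho_j}}\leq 1-p_{\mathrm s}$ on $\{\rho_j<\infty\}$. Iterating with the tower property yields
\begin{gather*}
\bbP{\theta_1<\infty,\ldots,\theta_J<\infty}\leq (1-p_{\mathrm s})^J\to 0 .
\end{gather*}
Hence almost surely some attempt $J(\omega)$ succeeds. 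With $N:=\rho_J$, this means $\norm{X_{N+k}-x^*}\leq a_k$ for all $k\geq0$, so $X_n\to x^*$.

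For the quadratic rate, the recursion $a_{k+1}=Qa_k^2$ gives $Qa_{k+1}=(Qa_k)^2$, hence $a_k=Q^{-1}(Qr)^{2^k}$. Here I need $Qr<1$ strictly, shrinking $r$ as in the proof of Lemma \ref{lem:quadratic-newton-tube}, so that the errors $\norm{X_{N+k}-x^*}$ are dominated by a doubly exponentially decaying sequence, i.e.\ R-quadratic convergence. I would also remark that on the success event the noise is $f(X)^\alpha\norm{Z}\lesssim a_k^{2\alpha}$ with $2\alpha>2$. So one even gets $\norm{X_{N+k+1}-x^*}\leq C\norm{X_{N+k}-x^*}^2$ whenever $\norm{Z_{N+k}}\leq R_k$, giving Q-quadratic convergence up to the tube scale. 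The R-quadratic statement is the one I would commit to.

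The main obstacle is the passage from deterministic $N$ to the random restart times $\rho_j$. One must check measurability of the failure times $\theta_j$ as stopping times. One must also check that the conditional lower bound $p_{\mathrm s}$ is uniform in the state $X_{\rho_j}\in B_r(x^*)$. This holds because $p_{\mathrm s}=\prod_k\bbP{\norm{Z_k}\leq R_k}$ depends only on the deterministic radii. A minor point is that $\tau_r$ and $\rho_j$ must land in $B_r(x^*)$ with $\norm{X-x^*}\leq a_0=r$, which holds by definition.
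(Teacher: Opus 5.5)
Your proof is correct, but it organizes the renewal argument differently from the paper.

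\textbf{The paper's route.} It restarts at successive entrances $\rho_j$ into $B_r(x^*)$, separated by exits $\eta_j$. On $\bigcup_K\{\eta_K=\infty\}$ it appeals to Lemma~\ref{lem:no-exit-local-convergence} (the annulus-exit argument) to get $X_n\to x^*$. On the event of infinitely many exits, every tube trial has failed, which has probability at most $\lim_J(1-p_{\mathrm s})^J=0$.

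\textbf{Your route.} You restart at the first visit to $B_r(x^*)$ after each \emph{tube failure}, which may be immediate if the process is still inside the ball. Lemma~\ref{lem:quadratic-newton-tube} needs only $X_{\rho_j}\in B_r(x^*)$ and the independence of $(Z_{\rho_j+k})_{k\geq0}$ from $\mathcal F_{\rho_j}$. So every restart is a fresh trial with success probability at least $p_{\mathrm s}$. Lemma~\ref{lem:torus-entrance}, which is uniform in the starting point, is needed only when the failure occurs outside the ball.

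\textbf{What each buys.} Your route never needs Lemma~\ref{lem:no-exit-local-convergence}, and the almost sure success of some trial gives convergence and the quadratic rate at once. In the paper's decomposition, consider the event that the process fails its tube at the last entrance and never exits again. There the paper has only Lemma~\ref{lem:no-exit-local-convergence}, which gives convergence but no rate. Your argument is therefore the cleaner proof of the theorem's second assertion. The paper's exit-based skeleton does isolate the ``never exits'' case as a tube-free qualitative statement valid for all $\alpha>\frac12$. Your argument, however, transfers equally well to Lemma~\ref{lem:shrinking-newton-tube}.

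\textbf{Two small points.} First, state explicitly that $\rho_{j+1}:=\inf\{n\geq\rho_j+\theta_j:X_n\in B_r(x^*)\}$ is a hitting time, not a re-entry from outside. Under the re-entry reading, $\rho_{j+1}=\infty$ on the event that the process never leaves the ball after failing, and you would again need Lemma~\ref{lem:no-exit-local-convergence}.

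Second, you are right that $Qr<1$ must be strict; with $Qr=1$ the radii $a_k$ are constant. Your Q-quadratic remark in fact holds on the whole success event. With $e_k:=\norm{X_{N+k}-x^*}\leq a_k$ and $\norm{Z_{N+k}}\leq R_k$,
\begin{gather*}
e_{k+1}\leq C_0e_k^2+(Q-C_0)\,e_k^{2\alpha}a_k^{2-2\alpha}= C_0e_k^2+(Q-C_0)\,e_k^2\left(\frac{e_k}{a_k}\right)^{2\alpha-2}\leq Qe_k^2,
\end{gather*}
since $2\alpha-2>0$. So you can commit to $e_{k+1}\leq Qe_k^2$ for all $k\geq0$, not only the R-quadratic bound.
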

\begin{proof}
Let $r$ be the radius chosen in \eqref{eq:choice-local-radius}. By Lemma \ref{lem:torus-entrance}, the process enters $B_r(x^*)$ almost surely. Define the successive entrance and exit times by
\begin{align*}
    \rho_1&:=\inf\{n\geq 0:X_n\in B_r(x^*)\},\\
    \eta_1&:=\inf\{n>\rho_1:X_n\notin B_r(x^*)\},
\end{align*}
and for $j\geq 1$,
\begin{align*}
    \rho_{j+1}&:=\inf\{n>\eta_j:X_n\in B_r(x^*)\},\\
    \eta_{j+1}&:=\inf\{n>\rho_{j+1}:X_n\notin B_r(x^*)\}.
\end{align*}
If $\eta_j<\infty$, then the strong Markov property and Lemma \ref{lem:torus-entrance} imply that $\rho_{j+1}<\infty$ almost surely. Thus every exit is followed almost surely by another entrance. On the event $\bigcup_{K \geq 1}\{\eta_{K} = \infty\}$, then Lemma \ref{lem:no-exit-local-convergence} yields the desired result.

If not, then at each entrance time $\rho_j$, Lemma \ref{lem:quadratic-newton-tube} gives a conditional probability at least $p_{\mathrm{s}}>0$ that the process remains in the shrinking tube
\begin{gather*}
    \norm{X_{\rho_j+k}-x^*}\leq a_k,
    \qquad k\geq 0.
\end{gather*}
where $a_k$ is as in the statement of the Lemma.

Define the event
\begin{gather*}
    G_j =: \{\|X_{\rho_j + k} - x^*\| > a_k,\ \forall k\}.
\end{gather*}
On $G_j^c$, by Lemma \ref{lem:quadratic-newton-tube} we have $X_n\to x^*$. On the other hand, the probability that the first $J$ entrances all fail to be successful is
\begin{gather*}
    \bbP{\bigcap_{j=1}^J G_j} \leq (1-p_s)^J.
\end{gather*}
Letting $J\to\infty$ we have
\begin{gather*}
    \bbP{\{X_n\ \text{never enters tube}\}} = \lim_{J \to \infty}\bbP{\bigcap_{j=1}^J G_j} \leq \lim_{J\to \infty}(1-p_s)^J = 0.
\end{gather*}
Hence, there exists a (random) index $K$ such that $\eta_K = \infty$ and $\bbP{G_K} = 1$. Thus convergence follows directly from Lemma \ref{lem:quadratic-newton-tube}.
\end{proof}
\\
\begin{remark}\label{rem2}
The shrinking-ball argument also gives a pathwise rate on the successful entrance event. Namely, on the event constructed in Lemma \ref{lem:quadratic-newton-tube} we obtain quadratic convergence
\begin{gather*}
    \norm{X_{N+k}-x^*}\leq a_k,
    \qquad k\geq 0,
\end{gather*}
where $a_k$ is as in the hypotheses of Lemma \ref{lem:quadratic-newton-tube}. Note that this is not the same as a deterministic rate for every sample path from time zero. The entrance time $N$ is random, and the proof only asserts that one of the random entrances is eventually successful almost surely.
\end{remark}\\

Observing the proof of Lemma \ref{lem:quadratic-newton-tube}, we can also generalize for $\alpha > \frac12$ at the cost of obtaining linear (or superlinear) convergence in the tube. This implies the result of Theorem \ref{thm:main-periodic} still holds even in this case.

\begin{lemma}[A shrinking Newton tube]\label{lem:shrinking-newton-tube}
There exist constants $\theta\in(0,1)$ and $p_{\mathrm{s}}>0$ with the following property. For every deterministic time $N$ and every state $X_N\in B_r(x^*)$,
\begin{gather}\label{eq:success-probability}
    \bbP{\norm{X_{N+k}-x^*}\leq r\theta^k\ \text{for all }k\geq 0\mid \mathcal F_N}
    \geq p_{\mathrm{s}}.
\end{gather}
Consequently, with conditional probability at least $p_{\mathrm{s}}$, one has
\begin{gather*}
    X_{N+k}\to x^*
\end{gather*}
without leaving $B_r(x^*)$.
\end{lemma}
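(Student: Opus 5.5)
We mirror the proof of Lemma \ref{lem:quadratic-newton-tube}, now targeting the geometric tube $a_k:=r\theta^k$ in place of the quadratic one, and only assuming $\alpha>\frac12$. Fix $\theta\in(\frac14,1)$, say $\theta=\frac12$, and use the choice \eqref{eq:choice-local-radius}. This gives $\frac{L}{2m}r\le\frac14$ and therefore, by \eqref{eq:quadratic-newton-estimate}, $\norm{T(x)-x^*}\le \frac14\norm{x-x^*}$ on $B_r(x^*)$. On the event $\{\norm{X_{N+k}-x^*}\le a_k\}$ we have $X_{N+k}\in B_r(x^*)$, so $H$ is invertible there and the first branch of \eqref{eq:torus-scheme} applies. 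Proposition \ref{prop:torus-quadratic-bounds} then gives $f(X_{N+k})^\alpha\le K_0 a_k^{2\alpha}$ with $K_0=(M_0/2)^\alpha$. Combining these,
\begin{gather*}
\norm{X_{N+k+1}-x^*}\le \tfrac14 a_k + K_0a_k^{2\alpha}\norm{Z_{N+k}}.
\end{gather*}
This is at most $a_{k+1}=\theta a_k$ provided
\begin{gather*}
\norm{Z_{N+k}}\le R_k:=\frac{\theta-\frac14}{K_0}\,a_k^{1-2\alpha}=\frac{\theta-\frac14}{K_0}\,r^{1-2\alpha}\theta^{-(2\alpha-1)k}.
\end{gather*}
Since $\alpha>\frac12$, the radius $R_k$ grows geometrically in $k$. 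This is exactly where $\alpha>\frac12$, rather than $\alpha>1$, suffices: the Newton step only needs to supply a linear contraction.

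Next we show that the product of the one-step success probabilities is positive. By Proposition \ref{prop:tailbound}, $\bbP{\norm{Z}>R_k}\le 2^{d/2}e^{-R_k^2/4}$, and this is summable in $k$ because $R_k^2$ grows geometrically. Each factor $\bbP{\norm{Z}\le R_k}$ is strictly positive, since $R_k>0$ and the Gaussian has full support. Using $\log(1-x)\ge -2x$ for the tail indices $k\ge I_0$, exactly as in Lemma \ref{lem:quadratic-newton-tube}, we obtain
\begin{gather*}
p_{\mathrm s}:=\prod_{k\ge0}\bbP{\norm{Z}\le R_k}>0.
\end{gather*}
The constants $\theta$, $R_k$ and $p_{\mathrm s}$ depend only on $r,m,L,M_0,\alpha,d$, and not on $N$ or on $X_N$.

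Finally, the $Z_{N+k}$, $k\ge0$, are independent of $\mathcal F_N$ and of each other. Hence, conditionally on $\mathcal F_N$, the event $\mathcal S_N=\bigcap_k\{\norm{Z_{N+k}}\le R_k\}$ has probability $p_{\mathrm s}$. On $\mathcal S_N\cap\{X_N\in B_r(x^*)\}$, induction on $k$ gives $\norm{X_{N+k}-x^*}\le r\theta^k$. The base case is $\norm{X_N-x^*}<r=a_0$, and the inductive step is the displayed estimate. This establishes \eqref{eq:success-probability}. Since $a_k\le r$ and $a_k\to0$, the process never leaves $B_r(x^*)$ and converges to $x^*$.

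The step needing the most care is the inductive one: we must confirm that being inside the tube guarantees that the Hessian branch is the one used, and that Proposition \ref{prop:torus-quadratic-bounds} applies. Both follow because the tube is contained in $B_r(x^*)\subset B_{r_0}(x^*)$. The summability of the tail probabilities is then routine.
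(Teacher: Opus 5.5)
Your proof is correct and follows the paper's argument: the same geometric tube $a_k=r\theta^k$, the same noise threshold $R_k\propto a_k^{1-2\alpha}$, and the same summability and product argument carried over from Lemma \ref{lem:quadratic-newton-tube}. The only difference is cosmetic. You keep the radius already fixed in \eqref{eq:choice-local-radius} and take $\theta>\frac14$, using the linear contraction $\norm{T(x)-x^*}\le\frac14\norm{x-x^*}$; the paper instead fixes $\theta$ and shrinks $r$ until $C_0r\le\theta/2$. Your version has the small advantage of not altering the radius used in the other lemmas, and you also spell out details the paper only gestures at: each factor $\bbP{\norm{Z}\le R_k}$ is positive, and $H$ is invertible inside the tube.
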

\begin{proof}
Let
\begin{gather*}
    C_0:=\frac{L}{2m},
    \qquad
    K_0:=\left(\frac{M_0}{2}\right)^\alpha.
\end{gather*}
Choose $\theta\in(0,1)$ and, by decreasing $r$ if necessary, assume
\begin{gather}\label{eq:theta-radius-choice}
    C_0 r\leq \frac{\theta}{2}.
\end{gather}
Set
\begin{gather*}
    a_k:=r\theta^k,
    \qquad k\geq 0.
\end{gather*}
Suppose that $\norm{X_{N+k}-x^*}\leq a_k$. Since $a_k\leq r$, Proposition \ref{prop:torus-quadratic-bounds} gives
\begin{gather}\label{ineq:fbound}
    f(X_{N+k})^\alpha\leq \left(\frac{M_0}{2}\right)^\alpha\|X_{N+k}-x^*\|^{2\alpha} \leq  K_0a_k^{2\alpha}.
\end{gather}
Also, by Lemma \ref{lem:torus-newton-estimate},
\begin{gather}\label{ineq:Newtonbound}
    \norm{T(X_{N+k})-x^*}\leq \frac{L}{2m}\|X_{N+k} - x^*\|^2 \leq C_0 a_k^2.
\end{gather}
Thus, if
\begin{gather}\label{eq:tube-noise-condition}
    K_0a_k^{2\alpha}\norm{Z_{N+k}}\leq a_{k+1}-C_0a_k^2,
\end{gather}
then inequalities \eqref{ineq:fbound} and \eqref{ineq:Newtonbound} imply
\begin{align*}
    \norm{X_{N+k+1}-x^*}
    &\leq \norm{T(X_{N+k})-x^*}+f(X_{N+k})^\alpha\norm{Z_{N+k}}\\
    &\leq C_0a_k^2 + K_0a_k^{2\alpha}\|Z_{N+k}\| \\
    &\leq C_0a_k^2+a_{k+1}-C_0a_k^2
    =a_{k+1}.
\end{align*}
By \eqref{eq:theta-radius-choice},
\begin{gather*}
    a_{k+1}-C_0a_k^2
    =\theta a_k-C_0a_k^2 = (\theta - C_0 a_k)a_k \geq (\theta - C_0 r)a_k \geq  \frac{\theta}{2}a_k.
\end{gather*}
Therefore \eqref{eq:tube-noise-condition} is guaranteed if
\begin{gather}\label{eq:Z-threshold}
    \norm{Z_{N+k}}\leq R_k:=\frac{\theta}{2K_0}a_k^{1-2\alpha}.
\end{gather}
Since $\alpha>1/2$, the exponent $1-2\alpha$ is negative, and hence $R_k\to\infty$ as $k\to\infty$. The rest of the proof proceeds similarly to Lemma \ref{lem:quadratic-newton-tube}.
\end{proof}
\\
\begin{remark}\label{rem3}
We observe from the proof of Lemma \ref{lem:shrinking-newton-tube} that the threshold $\alpha>1/2$ is exactly the condition that the noise amplitude $f(x)^\alpha$ decays faster than the distance scale $\norm{x-x^*}$ near the minimizer, since locally $f(x)\sim \norm{x-x^*}^2$.
\end{remark}\\

\section{Conclusion and Future Directions}
In this manuscript, we analyze a randomized Newton algorithm \eqref{eq:torus-scheme} for optimization and prove convergence for multimodal functions defined on the torus with a unique global minimizer. Our algorithm exploits knowledge of the global minimum value to prove quadratic path wise almost sure convergence. The focus is on a setting that allows for a clean result.
 
Let us remark that our work can be extended in several directions. Instead of the classical Newton algorithm at the core, quasi-Newton methods like BFGS could be considered. This would be important when the Hessian is not available and in higher dimensions where evaluating the Hessian is computationally costly. Another potential generalization is to allow the minimum to be at a larger set and not just at a unique point. This is the generic case in overparametrized settings. The assumptions on the function near the minimum must then be adjusted and convergence measured in the distance to the minimizing set. Extension to problems where the minimum value is unknown requires major changes to the proofs and the convergence would probably be weaker than almost sure. While one can potentially replace $f_{min}$ in \eqref{eq:torus-scheme} with some estimator of the global minimum, the scheme loses the strong Markov property which is essential in the proofs of Lemmas \ref{lem:torus-entrance} and \ref{lem:no-exit-local-convergence}.

\section*{Funding Information}
This work was completed while the first author was funded under the National Defense Science and Engineering Graduate (NDSEG) Fellowship administered through the Department of the Air Force (AFRL/Space Force).

\vskip2mm

%\par{\bf References.}%\, Always use $\backslash$cite$\{biblabelname\}$ (eg. \cite{taubes1}) to cite
          %references which have been named in the bibliography via
          %$\backslash$bibitem$\{biblabelname\}$.

          % Non-BibTeX users please use

          \end{document}